\newtheorem{theorem}{Theorem}[section]
\newtheorem{lemma}[theorem]{Lemma}
\newtheorem{example}[theorem]{Example}
\def\barr{\begin{array}}
\def\earr{\end{array}}
\title{Isolated subgroups of finite abelian groups}
\author{Marius T\u arn\u auceanu}
\date{February 9, 2021}
\begin{document}

\maketitle

\begin{abstract}
We say that a subgroup $H$ is isolated in a group $G$ if for every $x\in G$ we have either $x\in H$ or $\langle x\rangle\cap H=1$. In this short note, we describe the set of isolated subgroups of a finite abelian group. The technique used is based on an interesting connection between isolated subgroups and the function sum of element orders of a finite group.
\end{abstract}

{\small
\noindent
{\bf MSC2020\,:} Primary 20K01; Secondary 20K27.

\noindent
{\bf Key words\,:} finite abelian groups, isolated subgroups, sum of element orders.}
\vspace{-2mm}

\section{Introduction}

Let $G$ be a finite group. We say that a subgroup $H$ of $G$ is isolated in $G$ if for every $x\in G$ we have either $x\in H$ or $\langle x\rangle\cap H=1$.
The starting point for our discussion is given by the Z. Janko's paper \cite{3} that investigates isolated subgroups for certain classes of nonabelian $p$-groups. In the following, we will determine these subgroups for finite abelian groups. The main tool used is the function sum of element orders of $G$,
\begin{equation}
\psi(G)=\sum_{x\in G}o(x),\nonumber
\end{equation}defined by H. Amiri, S.M. Jafarian Amiri and I.M. Isaacs in \cite{1}. Given a subgroup $H$ of $G$, this has been generalized in \cite{6} to the function
\begin{equation}
\psi_H(G)=\sum_{x\in G}o_H(x),\nonumber
\end{equation}where $o_H(x)$ denotes the order of $x$ relative to $H$, i.e. the smallest positive integer $m$ such that $x^m\in H$. Clearly, for $H=1$ we have $\psi_H(G)=\psi(G)$.\newpage

We remark that

\begin{equation}
\psi_H(G)=\sum_{x\in H}o_H(x)+\sum_{x\in G\setminus H}o_H(x)=|H|+\sum_{x\in G\setminus H}\frac{o(x)}{|\langle x\rangle\cap H|}\nonumber
\end{equation}and therefore $H$ is isolated in $G$ if and only if

\begin{equation}
\psi_H(G)=|H|+\sum_{x\in G\setminus H}o(x)=|H|+\psi(G)-\psi(H).\nonumber
\end{equation}Since for $H\lhd G$ we have $\psi_H(G)=|H|\psi(G/H)$, we infer that a normal subgroup $H$ is isolated in $G$ if and only if

\begin{equation}
\psi(G)-\psi(H)=|H|\left(\psi(G/H)-1\right).
\end{equation}

In particular, this equivalence holds for all subgroups $H$ of a finite abelian group $G$. It will be used in what follows, together with Theorem 1 of \cite{5}:

\begin{theorem}
Let $G=\mathbb{Z}_{p^{\alpha_1}}\times\mathbb{Z}_{p^{\alpha_2}}\times\cdots\times\mathbb{Z}_{p^{\alpha_k}}$ be a finite abelian
$p$-group, where $1\leq\alpha_1\leq\alpha_2\leq...\leq\alpha_k$. Then
\begin{equation}
\psi(G)=1+\sum_{\alpha=1}^{\alpha_k}\left(p^{2\alpha}f_{(\alpha_1,\alpha_2,...,\alpha_k)}(\alpha)-p^{2\alpha-1}f_{(\alpha_1,\alpha_2,...,\alpha_k)}(\alpha-1)\right),
\end{equation}where
\begin{equation}
f_{(\alpha_1,\alpha_2,...,\alpha_k)}(\alpha)=\left\{\barr{lll}
p^{(k-1)\alpha},&\mbox{ if }&0\le\alpha\le\alpha_1\vspace*{1,5mm}\\
p^{(k-2)\alpha+\alpha_1},&\mbox{ if }&\alpha_1\le\alpha\le\alpha_2\\
\vdots\\
p^{\alpha_1+\alpha_2+...+\alpha_{k-1}},&\mbox{ if
}&\hspace{-1mm}\alpha_{k-1}\le\alpha\,.\earr\right.\nonumber
\end{equation}
\end{theorem}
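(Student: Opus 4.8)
The plan is to compute $\psi(G)$ by sorting the elements of $G$ according to their order. Since $G$ is a $p$-group of exponent $p^{\alpha_k}$, every $x\in G$ satisfies $o(x)=p^{\alpha}$ for a unique $\alpha$ with $0\le\alpha\le\alpha_k$. Writing $N(\alpha)$ for the number of elements of order exactly $p^{\alpha}$, I would first record the trivial identity
\begin{equation}
\psi(G)=\sum_{\alpha=0}^{\alpha_k}p^{\alpha}N(\alpha),\nonumber
\end{equation}
so that the whole problem reduces to counting the $N(\alpha)$.

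To count these elements I would pass to the cumulative quantity $F(\alpha)=|\{x\in G:x^{p^{\alpha}}=1\}|$, the number of elements whose order divides $p^{\alpha}$. Because $G=\mathbb{Z}_{p^{\alpha_1}}\times\cdots\times\mathbb{Z}_{p^{\alpha_k}}$ is a direct product, the condition $x^{p^{\alpha}}=1$ can be checked coordinate by coordinate, and in $\mathbb{Z}_{p^{\alpha_i}}$ the number of solutions of $p^{\alpha}y=0$ is $\gcd(p^{\alpha},p^{\alpha_i})=p^{\min(\alpha,\alpha_i)}$. Multiplying over the $k$ factors yields the closed form
\begin{equation}
F(\alpha)=p^{\sum_{i=1}^{k}\min(\alpha,\alpha_i)}.\nonumber
\end{equation}

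The key bookkeeping step is to match this with the function $f$ in the statement. I claim that $F(\alpha)=p^{\alpha}f_{(\alpha_1,\dots,\alpha_k)}(\alpha)$ for all $0\le\alpha\le\alpha_k$; equivalently, that $\sum_{i=1}^{k}\min(\alpha,\alpha_i)-\alpha$ equals the exponent of $f$ on each subinterval. On the range $\alpha_{j}\le\alpha\le\alpha_{j+1}$ the coordinates split as $\min(\alpha,\alpha_i)=\alpha_i$ for $i\le j$ and $\min(\alpha,\alpha_i)=\alpha$ for $i>j$, so the exponent equals $(\alpha_1+\cdots+\alpha_j)+(k-j)\alpha$; subtracting $\alpha$ returns precisely $(k-j-1)\alpha+\alpha_1+\cdots+\alpha_j$, which is the $j$-th branch of $f$ (with $j$ running from $0$ to $k-1$, recovering $p^{(k-1)\alpha}$ at $j=0$ and $p^{\alpha_1+\cdots+\alpha_{k-1}}$ at $j=k-1$). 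I expect this interval-by-interval verification to be the only genuine obstacle, since it requires tracking how many cyclic factors have already ``saturated'' at a given threshold $\alpha$; once the splitting is set up correctly it is a routine comparison.

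Finally I would assemble the pieces. Since the identity is the unique element of order $p^0$, we have $N(0)=1$, while $N(\alpha)=F(\alpha)-F(\alpha-1)$ for $\alpha\ge1$. Substituting into the first display, splitting off the $\alpha=0$ term, and then replacing $F(\alpha)=p^{\alpha}f(\alpha)$ and $F(\alpha-1)=p^{\alpha-1}f(\alpha-1)$ gives
\begin{equation}
\psi(G)=1+\sum_{\alpha=1}^{\alpha_k}p^{\alpha}\bigl(F(\alpha)-F(\alpha-1)\bigr)=1+\sum_{\alpha=1}^{\alpha_k}\bigl(p^{2\alpha}f(\alpha)-p^{2\alpha-1}f(\alpha-1)\bigr),\nonumber
\end{equation}
which is exactly the asserted formula.
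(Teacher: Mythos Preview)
Your argument is correct. The key identification $F(\alpha)=p^{\alpha}f_{(\alpha_1,\dots,\alpha_k)}(\alpha)$ via the piecewise computation of $\sum_i\min(\alpha,\alpha_i)$ is exactly right, and the telescoping from $N(\alpha)=F(\alpha)-F(\alpha-1)$ into the stated sum is clean.

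Regarding comparison: the paper does not actually prove this theorem. It is quoted verbatim as Theorem~1 of reference~\cite{5} and used as a tool, so there is no in-paper argument to compare against. Your order-stratification approach (count elements of each order by first counting $\{x:x^{p^\alpha}=1\}$ coordinatewise, then difference) is the natural one and is almost certainly what underlies the original proof in \cite{5}, given that the very shape of $f$---with breakpoints at the $\alpha_j$ and exponents $(k-j-1)\alpha+\alpha_1+\cdots+\alpha_j$---is precisely the exponent $\sum_i\min(\alpha,\alpha_i)-\alpha$ you derived.
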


We note that (2) gives a formula for the sum of element orders of an arbitrary finite abelian group because the function $\psi$ is multiplicative. Also, we note that $\psi(G)$ in Theorem 1.1 is a polynomial in $p$ of degree $2\alpha_k+\alpha_{k-1}+...+\alpha_1$. An alternative way of writing it is
\begin{equation}
\barr{lcl}
\psi(G)&=&p^{2\alpha_k+\alpha_{k-1}+...+\alpha_1}-\left(p-1\right)\displaystyle\sum_{\alpha=0}^{\alpha_k-1}p^{2\alpha}f_{(\alpha_1,\alpha_2,...,\alpha_k)}(\alpha)\vspace{1,5mm}\\
&=&p^{2\alpha_k+\alpha_{k-1}+...+\alpha_1}+...+p^{k+1}-p+1.\earr\nonumber
\end{equation}

Most of our notation is standard and will usually not be repeated here. Elementary notions and results on groups can be found in \cite{2,4}.

\section{Main results}

We start with the following lemma whose proof is elementary and will be omitted.

\begin{lemma}
Let $G=G_1\times G_2\times\cdots\times G_m$ be a finite abelian group, where $\gcd(|G_i|,|G_j|)=1$ for all $i\neq j$, and $H=H_1\times H_2\times\cdots\times H_m$ be a subgroup of $G$. Then $H$ is isolated in $G$ if and only if there are $i_1,i_2,...,i_k\in\{1,2,...,m\}$ such that $H_{i_j}$ is isolated in $G_{i_j}$ for all $j=1,2,...,k$ and $H_i=1$ for all $i\neq i_1,i_2,...,i_k$.
\end{lemma}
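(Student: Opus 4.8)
The plan is to exploit the coprimality structure via the $\psi$-criterion derived in the introduction, since the lemma concerns a product decomposition into subgroups of pairwise coprime orders. Because $\gcd(|G_i|,|G_j|)=1$, each $G_i$ is a characteristic Hall subgroup, the group $G$ is the internal direct product of the $G_i$, and every subgroup of $G$ decomposes uniquely as $H=H_1\times\cdots\times H_m$ with $H_i\le G_i$; this is the standard fact that makes the hypothesis $H=H_1\times\cdots\times H_m$ automatic rather than restrictive. The key observation I would use is the multiplicativity of $\psi$ across coprime factors, namely $\psi(G)=\prod_{i=1}^m\psi(G_i)$, together with the analogous factorizations $\psi(H)=\prod_i\psi(H_i)$, $|H|=\prod_i|H_i|$, and $\psi(G/H)=\prod_i\psi(G_i/H_i)$, the last because $G/H\cong\prod_i(G_i/H_i)$ and the quotient orders remain pairwise coprime.

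First I would rewrite the normal-subgroup isolation criterion $(1)$ in the equivalent multiplicative form. Setting $a_i=\psi(G_i)$, $b_i=\psi(H_i)$, $c_i=|H_i|$, and $d_i=\psi(G_i/H_i)$, the criterion $\psi(G)-\psi(H)=|H|(\psi(G/H)-1)$ becomes
\begin{equation}
\prod_{i=1}^m a_i-\prod_{i=1}^m b_i=\Big(\prod_{i=1}^m c_i\Big)\Big(\prod_{i=1}^m d_i-1\Big).\nonumber
\end{equation}
Crucially, within each factor $G_i$ the isolation criterion for $H_i$ reads $a_i-b_i=c_i(d_i-1)$, and one always has the trivial identity $b_i=c_i d_i$ coming from $\psi(H_i)=|H_i|\,\psi(H_i/1)$ applied inside the relation $\psi_{H_i}(G_i)=|H_i|\psi(G_i/H_i)$; more directly, $b_i=c_i d_i$ simply records that $H_i$ is isolated in itself, i.e. the equation for the pair $(H_i,H_i)$. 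Substituting $b_i=c_i d_i$ reduces the global equation to $\prod_i a_i=\prod_i c_i\prod_i d_i=\prod_i b_i+\prod_i c_i(\prod_i d_i-1)$, so the whole content collapses to comparing $\prod_i a_i$ with $\prod_i b_i+\prod_i c_i(\prod_i d_i-1)$, which after using $b_i=c_id_i$ is exactly $\prod_i a_i=\prod_i c_id_i$ plus correction terms.

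The cleanest route, which I would actually follow, is to define for each $i$ the nonnegative quantity $\varepsilon_i=a_i-c_i d_i=\psi(G_i)-|H_i|\psi(G_i/H_i)$. Since $H_i$ is isolated in $G_i$ exactly when $\psi(G_i)-\psi(H_i)=|H_i|(\psi(G_i/H_i)-1)$ and $\psi(H_i)\ge|H_i|$ with the relevant bookkeeping, I would show that $\varepsilon_i\ge 0$ with $\varepsilon_i=0$ precisely when $H_i$ is isolated in $G_i$ or $H_i=1$ — both of which I would verify make $a_i=c_id_i$. Then expanding $\prod_i a_i=\prod_i(c_id_i+\varepsilon_i)$ and subtracting the target $\prod_i c_id_i+$ (the term built from the global criterion), the global isolation equation holds if and only if all the mixed products involving at least one $\varepsilon_i$ vanish, which by nonnegativity forces each $\varepsilon_i=0$; conversely each factor being isolated or trivial makes every $\varepsilon_i=0$ and the identity holds. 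This gives the claimed characterization: the indices $i_1,\dots,i_k$ are exactly those with $H_{i_j}$ isolated (and nontrivial) in $G_{i_j}$, while the remaining factors must satisfy $H_i=1$.

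I expect the main obstacle to be verifying the sign and vanishing conditions on the $\varepsilon_i$ cleanly enough that the combinatorial expansion forces each term to zero — in particular, establishing that $\varepsilon_i\ge 0$ always (so that a sum of products of nonnegatives can vanish only when each contributing factor does) and pinning down the exact equality cases. The inequality $\psi(G_i)\ge|H_i|\psi(G_i/H_i)$ is intuitively the statement that passing to relative orders can only decrease the order sum, and I would prove it directly from $\psi_{H_i}(G_i)=|H_i|\psi(G_i/H_i)$ together with $o_{H_i}(x)\le o(x)$; the delicate point is isolating when equality versus strict inequality occurs and confirming this matches the isolation/triviality dichotomy. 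Since the paper states the lemma's proof is elementary and omitted, I anticipate the intended argument is essentially this multiplicativity-plus-nonnegativity bookkeeping rather than anything requiring the explicit $p$-group formula of Theorem 1.1.
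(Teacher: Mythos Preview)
Your two key algebraic claims are both false, and this is where the argument breaks. First, the identity $b_i=c_id_i$, that is $\psi(H_i)=|H_i|\,\psi(G_i/H_i)$, does not hold: for $G_i=\mathbb{Z}_4$ with $H_i$ its subgroup of order~$2$ one gets $\psi(H_i)=3$ but $|H_i|\,\psi(G_i/H_i)=2\cdot 3=6$. Your justification confuses $\psi_{H_i}(H_i)=|H_i|$ with $\psi_{H_i}(G_i)=|H_i|\,\psi(G_i/H_i)$; the ``equation for the pair $(H_i,H_i)$'' is just $0=0$ and says nothing about $c_id_i$. Second, and more damaging, the vanishing locus of $\varepsilon_i=\psi(G_i)-|H_i|\,\psi(G_i/H_i)$ is not what you assert. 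Writing $\varepsilon_i=\sum_{x\in G_i}\bigl(o(x)-o_{H_i}(x)\bigr)$ with every summand nonnegative, any $1\neq x\in H_i$ contributes $o(x)-1>0$; hence $\varepsilon_i=0$ forces $H_i=1$, not merely ``$H_i$ isolated in $G_i$.'' Your expansion $\prod_i(c_id_i+\varepsilon_i)$ therefore collapses only when $H=1$, so the argument cannot detect nontrivial isolated factors and never closes.

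The paper omits the proof as ``elementary,'' which points to working directly from the definition rather than through $\psi$: coprimality gives $\langle(x_1,\dots,x_m)\rangle=\prod_i\langle x_i\rangle$ and hence $\langle x\rangle\cap H=\prod_i\bigl(\langle x_i\rangle\cap H_i\bigr)$, so isolation can be tested componentwise. Carrying this out carefully, though, reveals that the lemma as literally stated is delicate: for $G=\mathbb{Z}_2\times\mathbb{Z}_3$ and $H=\mathbb{Z}_2\times 1$ one has $H_1=G_1$ isolated in $G_1$ and $H_2=1$, yet a generator $x$ of $G\cong\mathbb{Z}_6$ satisfies $\langle x\rangle\cap H=H\neq 1$ while $x\notin H$, so $H$ is not isolated in $G$. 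Thus any correct proof must grapple with what extra constraint the componentwise conditions really force; your $\psi$-bookkeeping, even if repaired, would have to detect this same obstruction.
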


Lemma 2.1 shows that our study can be reduced to finite abelian $p$-groups.

\begin{lemma}
Let $G=\mathbb{Z}_{p^{\alpha_1}}\times\mathbb{Z}_{p^{\alpha_2}}\times\cdots\times\mathbb{Z}_{p^{\alpha_k}}$ be a finite abelian
$p$-group, where $1\leq\alpha_1\leq\alpha_2\leq...\leq\alpha_k$, and $H$ be a maximal subgroup of $G$. If $H$ is isolated in $G$, then $G$ is elementary abelian and $H$ is a direct factor of $G$.
\end{lemma}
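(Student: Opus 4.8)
The plan is to use the normal-subgroup isolation criterion (1) from the introduction, namely that a subgroup $H$ of a finite abelian group $G$ is isolated in $G$ if and only if $\psi(G)-\psi(H)=|H|\left(\psi(G/H)-1\right)$. Since $H$ is maximal in the $p$-group $G$, we have $|G/H|=p$, so $\psi(G/H)=\psi(\mathbb{Z}_p)=1+(p-1)p=p^2-p+1$. Thus the isolation condition becomes $\psi(G)-\psi(H)=|H|\left(p^2-p\right)=|H|\,p(p-1)$. My strategy is to compute both sides explicitly, or at least their leading behavior in $p$, using the polynomial formula (2) of Theorem 1.1, and to show that the equality forces every $\alpha_i=1$ (so that $G$ is elementary abelian) and simultaneously pins down the structure of $H$ as a direct factor.

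First I would set $|G|=p^n$ with $n=\alpha_1+\cdots+\alpha_k$, so $|H|=p^{n-1}$ and the right-hand side equals $p^{n}(p-1)=p^{n+1}-p^{n}$. The key computational input is that, by the remark following Theorem 1.1, $\psi(G)$ is a polynomial in $p$ whose leading term is $p^{2\alpha_k+\alpha_{k-1}+\cdots+\alpha_1}=p^{\,n+\alpha_k}$ and whose lowest terms are $-p+1$. The main idea is a degree and leading-coefficient comparison: the right-hand side $p^{n+1}-p^n$ has degree $n+1$, whereas $\psi(G)$ has degree $n+\alpha_k$. Since $\psi(H)$ is a polynomial of strictly smaller degree than $\psi(G)$ (because $H$ is a proper subgroup, so $|H|<|G|$ forces a smaller top type), the degree of the left-hand side $\psi(G)-\psi(H)$ is exactly $n+\alpha_k$. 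Matching this against the degree $n+1$ of the right-hand side forces $\alpha_k=1$, and since $\alpha_k$ is the largest of the exponents, this gives $\alpha_1=\cdots=\alpha_k=1$, i.e.\ $G$ is elementary abelian of rank $k$.

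Once $G=\mathbb{Z}_p^{\,k}$ is known, every subgroup is a direct factor complemented in $G$ as a vector space, but to state the conclusion precisely I would still verify that the isolation equation is actually satisfied — equivalently, that $H$ being maximal in the elementary abelian case does yield an isolated subgroup, confirming the forward direction is consistent. For an elementary abelian $p$-group, any maximal subgroup $H$ is a hyperplane, hence a direct factor $\mathbb{Z}_p^{\,k-1}$, and it is isolated because every $x\notin H$ has $\langle x\rangle\cong\mathbb{Z}_p$ meeting $H$ trivially (any nonzero multiple of $x$ lies outside $H$ as well, since $H$ is a subspace and $x\notin H$). This closes the argument: isolation of a maximal $H$ forces $G$ elementary abelian, and in that setting $H$ is automatically a direct factor.

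The step I expect to be the main obstacle is the degree bookkeeping for $\psi(H)$: I must argue carefully that replacing $G$ by its maximal subgroup $H$ strictly lowers the degree of the $\psi$-polynomial, so that no cancellation of leading terms can occur in $\psi(G)-\psi(H)$. This requires understanding the possible abelian types of a maximal subgroup $H$ of $G$ — either some $\alpha_i$ drops by one, or the rank drops by one — and checking in each case that the top degree $2\beta_k+\beta_{k-1}+\cdots$ of $\psi(H)$ is strictly below $n+\alpha_k$. Handling the boundary case $\alpha_k=1$ versus $\alpha_k\ge 2$ cleanly, and ensuring the leading coefficients do not conspire to cancel, is the delicate part; the rest of the comparison is then a routine matching of exponents.
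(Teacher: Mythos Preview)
Your approach is essentially the paper's: use the isolation criterion (1) to obtain $\psi(G)-\psi(H)=p^{n+1}-p^n$, then match the degree $n+1$ of the right-hand side against the degree $n+\alpha_k$ of $\psi(G)$ to force $\alpha_k=1$. The obstacle you flag is easier than you fear---since $H\leq G$ forces the exponent of $H$ to divide $p^{\alpha_k}$, the largest invariant factor $\beta_l$ of $H$ satisfies $\beta_l\le\alpha_k$, hence $\deg\psi(H)=(n-1)+\beta_l\le(n-1)+\alpha_k<n+\alpha_k=\deg\psi(G)$, and no case analysis of maximal-subgroup types is needed.
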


\begin{proof}
Let $n=\alpha_1+\alpha_2+...+\alpha_k$. Since $H$ is maximal and isolated in $G$, by the equality (1) it follows that
\begin{equation}
\psi(G)-\psi(H)=p^{n-1}\left(\psi(\mathbb{Z}_p)-1\right)=p^{n+1}-p^n\nonumber
\end{equation}and so $\psi(G)$ is a polynomial in $p$ of degree $n+1$. Thus $n+\alpha_k=n+1$, that is $\alpha_k=1$, implying that $G$ is elementary abelian. The second conclusion is obvious.
\end{proof}

From Lemma 2.2 we infer that if $H$ is an isolated subgroup of a finite abelian $p$-group $G$ and $H\neq G$, then
\begin{equation}
H\subset\Omega_1(G)=\{x\in G\mid x^p=1\}.
\end{equation}Indeed, take a subgroup $K$ of $G$ such that $H$ is maximal in $K$. Then $H$ is isolated in $K$ and Lemma 2.2 shows that $K$ must be elementary abelian, i.e. $K\subseteq\Omega_1(G)$. Hence $H$ is strictly contained in $\Omega_1(G)$.

\begin{lemma}
Let $G=\mathbb{Z}_{p^{\alpha_1}}\times\mathbb{Z}_{p^{\alpha_2}}\times\cdots\times\mathbb{Z}_{p^{\alpha_k}}$ be a finite abelian
$p$-group with $1<\alpha_1\leq\alpha_2\leq...\leq\alpha_k$. Then $G$ has no isolated proper subgroup.
\end{lemma}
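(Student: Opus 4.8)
The plan is to derive a contradiction directly from the definition of isolation, using the containment (3) together with the standing hypothesis that every exponent satisfies $\alpha_i\ge 2$. Suppose, for contradiction, that $G$ possesses an isolated proper subgroup $H$ with $H\neq 1$ (the trivial subgroup is isolated in every group and is excluded here). Since $H\neq G$, the containment (3) gives $H\subset\Omega_1(G)$, so every nonidentity element of $H$ has order exactly $p$.

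First I would describe $\Omega_1(G)$ factor by factor. In each cyclic factor $\mathbb{Z}_{p^{\alpha_i}}$ the subgroup of order $p$ is generated by the element of order $p$, and because $\alpha_i\ge 2$ this generator is itself a $p$-th power in $\mathbb{Z}_{p^{\alpha_i}}$; hence every element of $\Omega_1(\mathbb{Z}_{p^{\alpha_i}})$ is a $p$-th power. Passing to the direct product, every element of $\Omega_1(G)$ is a $p$-th power in $G$. This is precisely the point at which the assumption $\alpha_1>1$ is essential: if some $\alpha_i=1$, the corresponding factor would contribute elements of order $p$ that are not $p$-th powers.

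Next I would produce an element witnessing the failure of isolation. Choose $h\in H$ with $h\neq 1$. By the previous step we may write $h=x^p$ for some $x\in G$. Since $h$ has order $p$, a short computation ($x^{p^2}=h^p=1$ while $x^p=h\neq 1$) shows that $x$ has order $p^2$; in particular $x\notin\Omega_1(G)$, so $x\notin H$. On the other hand $\langle x\rangle$ contains $x^p=h\neq 1$, whence $\langle x\rangle\cap H\neq 1$. Thus $x$ lies neither in $H$ nor satisfies $\langle x\rangle\cap H=1$, contradicting the isolation of $H$. Therefore no such $H$ exists.

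I expect the only delicate point to be the first two steps, namely identifying $\Omega_1(\mathbb{Z}_{p^{\alpha_i}})$ and checking that each of its elements is a genuine $p$-th power, since this is exactly where the hypothesis $\alpha_i\ge 2$ enters. Once that is in hand, the element $x$ of order $p^2$ is immediate and the contradiction is forced by the very definition of an isolated subgroup, so no appeal to the explicit formula for $\psi(G)$ is needed.
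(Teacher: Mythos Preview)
Your argument is correct. Once (3) is in hand, the observation that $\alpha_i\ge 2$ forces $\Omega_1(\mathbb{Z}_{p^{\alpha_i}})\subseteq (\mathbb{Z}_{p^{\alpha_i}})^p$ in every coordinate, and hence $\Omega_1(G)\subseteq G^p$, is exactly what is needed; the element $x$ of order $p^2$ with $x^p=h\in H\setminus\{1\}$ then violates isolation immediately. Your handling of the trivial subgroup is also appropriate, since the lemma is really asserting that $1$ and $G$ are the only isolated subgroups.

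This is a genuinely different route from the paper's own proof. The paper argues via the $\psi$-identity (1): it writes $\psi(G)-\psi(H)=p^m(\psi(G/H)-1)$ with $|H|=p^m$, expands both sides as explicit polynomials in $p$ using Theorem~1.1, and compares the lowest nontrivial terms to force $p^{k+1}=p^{m+k+1}$, hence $m=0$. That computation also relies on (3) to know that $H$ is elementary abelian and that $G/H$ still has $k$ cyclic factors with exponents $\beta_i\in\{\alpha_i,\alpha_i-1\}$. Your proof bypasses the $\psi$-machinery entirely and works directly from the definition of isolation; it is shorter and more conceptual. The paper's approach, on the other hand, is consistent with its overall theme of linking isolation to the function $\psi$, and the polynomial comparison is what makes equation~(1) do real work. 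Both proofs ultimately rest on the same preliminary step, namely the containment $H\subset\Omega_1(G)$ established just before Lemma~2.3.
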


\begin{proof}
Assume that $H$ is an isolated proper subgroup of $G$ and let $|H|=p^m$. Then
\begin{equation}
\psi(G)-\psi(H)=p^m\left(\psi(G/H)-1\right).
\end{equation}

By (3) we know that $H$ is elementary abelian and $m<k$. Then $\psi(H)=p^{m+1}-p+1$ and therefore the left side of (4) becomes
\begin{equation}
\psi(G)-\psi(H)=p^{2\alpha_k+\alpha_{k-1}+...+\alpha_1}+...+p^{k+1}-p^{m+1}.\nonumber
\end{equation}

On the other hand, (3) shows that $G/H$ has also $k$ direct factors
\begin{equation}
G/H=\mathbb{Z}_{p^{\beta_1}}\times\mathbb{Z}_{p^{\beta_2}}\times\cdots\times\mathbb{Z}_{p^{\beta_k}},\nonumber
\end{equation}where either $\beta_i=\alpha_i$ or $\beta_i=\alpha_i-1$, for all $i=1,2,...,k$. Thus the right side of (4) becomes
\begin{equation}
\barr{lcl}
p^m\left(\psi(G/H)-1\right)&=&p^m\left(p^{2\beta_k+\beta_{k-1}+...+\beta_1}+...+p^{k+1}-p\right)\vspace{1,5mm}\\
&=&p^{m+2\beta_k+\beta_{k-1}+...+\beta_1}+...+p^{m+k+1}-p^{m+1}.\earr\nonumber
\end{equation}

Hence (4) leads to $p^{k+1}=p^{m+k+1}$, i.e. $m=0$, a contradiction.
\end{proof}

In the following, let $G=\mathbb{Z}_{p^{\alpha_1}}\times\mathbb{Z}_{p^{\alpha_2}}\times\cdots\times\mathbb{Z}_{p^{\alpha_k}}$ be a finite abelian
$p$-group, where $1=\alpha_1=\alpha_2=...=\alpha_r<\alpha_{r+1}\leq...\leq\alpha_k$, and $H$ be a subgroup of order $p$ of $G$. Then $H$ is isolated in $G$ if and only if $\langle x\rangle\cap H=1$, $\forall\, x\in G\setminus H$, that is if and only if $H$ is contained in no cyclic subgroup of order $p^s$ with $s\geq 2$ of $G$. This is equivalent with the fact that $H$ is not a subgroup of order $p$ of $\mathbb{Z}_{p^{\alpha_{r+1}}}\times\cdots\times\mathbb{Z}_{p^{\alpha_k}}$. It is well-known that there are $q=\frac{p^{k-r}-1}{p-1}$ such subgroups, say $H_i$, $i=1,2,...,q$. Consequently,
$$H \mbox{ is isolated in } G \mbox{ if and only if } H\neq H_i, \forall\, i=1,2,...,q.$$Moreover, we infer that $G$ possesses
\begin{equation}
\frac{p^k-1}{p-1}-\frac{p^{k-r}-1}{p-1}=p^{k-r}\frac{p^r-1}{p-1}\nonumber
\end{equation} isolated subgroups of order $p$.

The above remark can be easily extended to subgroups $H$ of arbitrary orders of $G$, namely
$$H \mbox{ is isolated in } G \mbox{ if and only if } H_i\nsubseteq H, \forall\, i=1,2,...,q.$$Hence we proved the next lemma.

\begin{lemma}
Let $G=\mathbb{Z}_{p^{\alpha_1}}\times\mathbb{Z}_{p^{\alpha_2}}\times\cdots\times\mathbb{Z}_{p^{\alpha_k}}$ be a finite abelian
$p$-group, where $1=\alpha_1=\alpha_2=...=\alpha_r<\alpha_{r+1}\leq...\leq\alpha_k$, and let $A=\mathbb{Z}_{p^{\alpha_{r+1}}}\times\cdots\times\mathbb{Z}_{p^{\alpha_k}}$. Then a subgroup $H$ of $G$ is isolated in $G$ if and only if $H\cap A=1$.
\end{lemma}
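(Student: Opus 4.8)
The plan is to reduce the whole statement to a single structural fact about socles of cyclic subgroups, and then read off both implications. Writing $G$ with generators $e_1,\dots,e_k$ of its cyclic factors and setting $f_i=p^{\alpha_i-1}e_i$ (an element of order $p$), the socle of a cyclic subgroup $\langle x\rangle$ of order $p^s$ with $s\geq 2$ is its unique subgroup of order $p$, namely $\langle x^{p^{s-1}}\rangle$, and $x^{p^{s-1}}$ is a $p$-th power. So the first step is to identify the order-$p$ elements of $G$ that occur as such socles, i.e. to prove
\begin{equation}
\Omega_1(G)\cap G^p=\langle f_{r+1},\dots,f_k\rangle=\Omega_1(A).\nonumber
\end{equation}
This is the one computation to carry out: for $i\le r$ we have $\alpha_i=1$, so the $i$-th coordinate of any $p$-th power vanishes, whereas for $i>r$ we have $\alpha_i\geq 2$, so $f_i=(p^{\alpha_i-2}e_i)^p$ is a $p$-th power. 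Thus the socles are exactly the nonzero elements of $A$, which is precisely the content already encoded in the preceding paragraph via the subgroups $H_1,\dots,H_q$.

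For the forward implication I would assume $H$ is isolated and proper. By (3) we already have $H\subseteq\Omega_1(G)$, so $H$ is elementary abelian and $H\cap A=H\cap\Omega_1(A)$. If this were nontrivial it would contain some $y\in\Omega_1(A)$ of order $p$; by the structural fact $y=z^p$ with $o(z)=p^2$, so $z\notin H$ while $\langle z\rangle\cap H\ni y\neq 1$, contradicting isolation. Hence $H\cap A=1$.

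For the reverse implication I would assume $H\cap A=1$ and first observe that this already forces $H$ to be elementary abelian: if some $x\in H$ had $o(x)=p^s$ with $s\geq 2$, then the socle $x^{p^{s-1}}$ would be a nonzero element of $\Omega_1(A)\subseteq A$ lying in $H$, contradicting $H\cap A=1$. Now take any $x\in G\setminus H$; I want $\langle x\rangle\cap H=1$. If $o(x)=p$ this is immediate, since $\langle x\rangle$ has prime order and is not contained in $H$. If $o(x)=p^s$ with $s\geq 2$, then $\langle x\rangle\cap H\subseteq\langle x\rangle\cap\Omega_1(G)=\langle x^{p^{s-1}}\rangle$, and the socle $x^{p^{s-1}}\in\Omega_1(A)\subseteq A$ is not in $H$ because $H\cap A=1$; hence this order-$p$ group meets $H$ trivially and $\langle x\rangle\cap H=1$. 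Therefore $H$ is isolated.

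The only real obstacle is getting the socle identity $\Omega_1(G)\cap G^p=\Omega_1(A)$ exactly right; after that, everything rests on the elementary observation that a cyclic group of order $p$ is either contained in $H$ or meets it trivially, together with the containment (3) proved earlier. I would finally note that the equivalence is stated for proper subgroups, since $G$ itself is always isolated while $G\cap A=A$ is nontrivial whenever $r<k$; the case $H=1$ is trivially consistent with both conditions.
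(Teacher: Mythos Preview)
Your proof is correct and follows essentially the same route as the paper. The paper's argument, contained in the paragraph preceding Lemma~2.4, identifies the order-$p$ subgroups $H_1,\dots,H_q$ of $A$ as precisely the socles of cyclic subgroups of order $\geq p^2$, and then asserts that a general $H$ is isolated iff it contains none of the $H_i$; your identity $\Omega_1(G)\cap G^p=\Omega_1(A)$ is exactly this statement rephrased, and your two implications spell out in full what the paper leaves as ``can be easily extended.'' Your remark that the equivalence must be read for proper $H$ (since $G$ is always isolated while $G\cap A=A\neq 1$ when $r<k$) is a genuine clarification the paper only makes later in Theorem~2.5.
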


In particular, Lemma 2.4 shows that all subgroups of an elementary abelian $p$-group are isolated.

We summarize the results in Lemmas 2.3 and 2.4 as follows.\newpage

\begin{theorem}
Let $G=\mathbb{Z}_{p^{\alpha_1}}\times\mathbb{Z}_{p^{\alpha_2}}\times\cdots\times\mathbb{Z}_{p^{\alpha_k}}$ be a finite abelian
$p$-group, where $1\leq\alpha_1\leq\alpha_2\leq...\leq\alpha_k$.
\begin{itemize}
\item[{\rm a)}] If $\alpha_1>1$, then the unique isolated subgroups of $G$ are $1$ and $G$.
\item[{\rm b)}] If $1=\alpha_1=\alpha_2=...=\alpha_r<\alpha_{r+1}\leq...\leq\alpha_k$ and $A=\mathbb{Z}_{p^{\alpha_{r+1}}}\times\cdots\times\mathbb{Z}_{p^{\alpha_k}}$, then the isolated subgroups of $G$ are $G$ and all subgroups $H\leq G$ with $H\cap A=1$.
\end{itemize}
\end{theorem}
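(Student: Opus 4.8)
The plan is to read off both statements directly from Lemmas 2.3 and 2.4, after first recording the two isolated subgroups that are always present. I would begin by noting that in any group the subgroups $1$ and $G$ are isolated: if $H=G$ then $x\in H$ for every $x\in G$, while if $H=1$ then $\langle x\rangle\cap H=1$ for every $x$. Hence $1$ and $G$ sit on the list in every case, and the only real task is to decide which intermediate subgroups survive.

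For part a) I would invoke Lemma 2.3. The hypothesis $\alpha_1>1$ is exactly the hypothesis $1<\alpha_1\le\cdots\le\alpha_k$ of that lemma, whose conclusion (as its proof shows, forcing $m=0$) is that no subgroup strictly between $1$ and $G$ can be isolated. Combined with the observation above, this pins the isolated subgroups down to precisely $1$ and $G$.

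For part b) I would apply Lemma 2.4, which asserts that a subgroup $H$ is isolated if and only if $H\cap A=1$. The one point requiring care is the whole group: since the hypothesis $1=\alpha_1=\cdots=\alpha_r<\alpha_{r+1}$ forces $r<k$, we have $A\neq 1$, so $G\cap A=A\neq 1$, and yet $G$ is isolated. Thus Lemma 2.4 captures exactly the \emph{proper} isolated subgroups: each such $H$ satisfies $H\cap A=1$, and conversely any $H$ with $H\cap A=1$ is automatically proper, because $A\not\subseteq H$ (otherwise $H\cap A=A\neq 1$) together with $A\le G$ gives $H\neq G$. Adjoining $G$ by hand then produces the stated list, namely $G$ together with all $H\le G$ satisfying $H\cap A=1$.

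The step I expect to be the only genuine obstacle is this reconciliation in part b): taken literally at $H=G$, the criterion of Lemma 2.4 would declare $G$ non-isolated, which is false, so the argument must explicitly quarantine the case $H=G$ and verify that exactly the isolated proper subgroups meet $A$ trivially. Everything else is a direct transcription of the two lemmas. As a closing remark I would observe that part a) is simply the degenerate case $r=0$ of part b): there $A=G$, the condition $H\cap A=1$ reduces to $H=1$, and the list collapses to $1$ and $G$.
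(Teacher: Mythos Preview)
Your proposal is correct and matches the paper's own approach: the paper gives no separate proof of Theorem 2.5, merely stating that it ``summarize[s] the results in Lemmas 2.3 and 2.4,'' which is precisely what you do. Your observation that Lemma 2.4, read literally at $H=G$, would misclassify $G$ as non-isolated is a genuine clarification of a point the paper leaves implicit; your fix---restricting Lemma 2.4 to proper subgroups and adjoining $G$ by hand---is exactly what the phrasing of Theorem 2.5(b) reflects.
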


Finally, we mention that the computation of isolated subgroups of finite abelian $p$-groups can be made by using the well-known Goursat's lemma (see e.g. the result (4.19) of \cite{4}). We exemplify it in three particular cases:

\begin{example}
\begin{itemize}
\item[{\rm 1.}] The group $\mathbb{Z}_p\times\mathbb{Z}_{p^m}$ with $m\geq 2$ has $p+2$ isolated subgroups, namely $1$, $G$ and $p$ subgroups of order $p$.
\item[{\rm 2.}] The group $\mathbb{Z}_p\times\mathbb{Z}_{p^m}\times\mathbb{Z}_{p^n}$ with $m,n\geq 2$ has $p^2+2$ isolated subgroups, namely $1$, $G$ and $p^2$ subgroups of order $p$.
\item[{\rm 3.}] The group $\mathbb{Z}_p\times\mathbb{Z}_p\times\mathbb{Z}_{p^m}$ with $m\geq 2$ has $2p^2+p+2$ isolated subgroups, namely $1$, $G$, $p^2+p$ subgroups of order $p$ and $p^2$ subgroups of order $p^2$.
\end{itemize}
\end{example}

\vspace*{3ex}\small

\hfill
\begin{minipage}[t]{5cm}
Marius T\u arn\u auceanu \\
Faculty of  Mathematics \\
``Al.I. Cuza'' University \\
Ia\c si, Romania \\
e-mail: {\tt tarnauc@uaic.ro}
\end{minipage}

\end{document}